\definecolor{refkey}{rgb}{0,0,1}
\definecolor{labelkey}{rgb}{1,0,0}
\numberwithin{equation}{chapter}
\theoremstyle{plain}
\newtheorem{theorem}{Theorem}
\newtheorem{proposition}[theorem]{Proposition}
\newtheorem{corollary}[theorem]{Corollary}
\theoremstyle{definition}
\newtheorem{remark}[theorem]{Remark}
\theoremstyle{plain}
\numberwithin{theorem}{chapter}
\theoremstyle{definition}
\newtoks{\thehRemark}
\newtheorem*{Remark}{\the\thehRemark}
\newenvironment{claim}[1][{\textup{(\theequation)}}]{\refstepcounter{equation}\vglue10pt
\begin{trivlist}
\item[{\hskip\labelsep#1}]}{\vglue10pt\end{trivlist}}
\newcommand{\sL}{\mathscr{L}}
\newcommand{\cX}{\mathcal{X}}
\newcommand{\sign}{\operatorname{sign}}
\newcommand{\Tr}{\operatorname{Tr}}
\newcommand{\mes}{\operatorname{mes}}
\newcommand{\supp}{\operatorname{supp}}
\newcommand{\TF}{\mathsf{TF}}
\newcommand{\y}{\mathsf{y}}
\newcommand{\bR}{\mathbb{R}}
\newcommand{\bC}{\mathbb{C}}
\newcommand{\sC}{\mathscr{C}}
\newcommand{\sH}{\mathscr{H}}
\newcommand{\fH}{\mathfrak{H}}
\begin{document}

\title{Thomas-Fermi approximation to electronic density\thanks{\emph{2010 Mathematics Subject Classification}: 35P20, 81V70 .}\thanks{\emph{Key words and phrases}: electronic density,  Thomas-Fermi approximation.}
}

\author{Victor Ivrii\thanks{This research was supported in part by National Science and Engineering  Research Council (Canada) Discovery Grant  RGPIN 13827}}
\maketitle

\begin{abstract}
In heavy atoms  and molecules, on the distances $a \gg Z^{-1}$ from all of the nuclei (with a charge $Z_m$) we prove that $\rho_\Psi (x)$ is approximated in $\sL^p$-norm, by the Thomas-Fermi  density.
\end{abstract}

\chapter{Introduction}
\label{sect-1}
The purpose of this paper is to show that on the distances $\gg Z^{-1}$  from the nuclei   Thomas-Fermi density $\rho^\TF$ provides rather good approximation for electronic density $\rho_\Psi$ (with an error estimate in $\sL^p$-norm) 
while in \cite{ivrii:strong-scott} we considered rather short distances and used Strong-Scott density.

Let us consider the following operator (quantum Hamiltonian)
\begin{gather}
\mathsf{H}=\mathsf{H}_N\coloneqq   \sum_{1\le j\le N} H _{V,x_j}+\sum_{1\le j<k\le N}|x_j-x_k| ^{-1}
\label{eqn-1.1}\\
\shortintertext{on}
\fH= \bigwedge_{1\le n\le N} \sH, \qquad \sH=\sL^2 (\bR^3, \bC^q) \label{eqn-1.2}\\
\shortintertext{with}
H_V =-\Delta -V(x) 
\label{eqn-1.3}
\end{gather}
describing $N$ same type particles in (electrons) the external field with the scalar potential $-V$ (it is more convenient but contradicts notations of the previous chapters), and repulsing one another according to the Coulomb law.

Here $x_j\in \bR ^3$ and $(x_1,\ldots ,x_N)\in\bR ^{3N}$, potential $V(x)$ is assumed to be real-valued. Except when specifically mentioned we assume that
\begin{equation}
V(x)=\sum_{1\le m\le M} \frac{Z_m }{|x-\y_m|}
\label{eqn-1.4}
\end{equation}
where $Z_m>0$ and $\y_m$ are charges and locations of nuclei.

Mass is equal to $\frac{1}{2}$ and the Plank constant and a charge are equal to $1$ here. We assume that 
\begin{equation}
N\asymp Z=Z_1+\ldots+Z_M, \qquad Z_m\asymp Z \qquad \forall m.
\label{eqn-1.5}
\end{equation}

Our purpose is to prove that at the distances $a\gg Z^{-1}$ from the nuclei the \emph{electronic density}
\begin{equation}
\rho_\Psi (x) =N\int |\Psi (x,x_2,\ldots,x_N)|^2\,dx_2\cdots dx_N
\label{eqn-1.6}
\end{equation}
is approximated is approximated in $\sL^1(B(\y_m,a))$-norm with the relative error by the Thomas-Fermi density.

\begin{theorem}\label{thm-1.1}
\begin{gather}
\supp(U)  \subset \{x\colon a\le \ell(x)\le 2a\}, \qquad \epsilon Z^{-1}\le a\le \epsilon,\qquad |U|\le  1\,,
\label{eqn-1.7}
\intertext{where here and below}
\ell(x)= \min_{1\le m\le M} |x-\y_m|
\label{eqn-1.8}\\
\intertext{is the distance from $x$ to the nearest nucleus,}
\zeta = \left\{\begin{aligned}
&Z^{1/2}\ell^{-1/2} && \ell \le Z^{-1/3},\\
&\ell^{-2} &&\ell \ge Z^{-1/3}, 
\end{aligned}\right.
\label{eqn-1.9}
\end{gather}
 Then  under assumption
\begin{equation}
\min_{1\le m< m' \le M} |\y_{m}-\y_{m'}|\ge Z^{-1/3+\sigma}
\label{eqn-1.10}
\end{equation}
the following estimate holds:
\begin{gather}
|\int U(x) \bigl(\rho_\Psi (x)- \rho^\TF (x)\bigr)\,dx| \le  C\Bigl(E\|U \|_{\sL^1} + F^{1/3} \|U\|_{\sL^1}^{2/3}  + G\Bigr)
\label{eqn-1.11}\\
\shortintertext{with}
E= \zeta^2 a^{-1},\qquad  F=\zeta^{4/3}Z^{5/3-\delta},\qquad G=\zeta^{-2}Z^{5/3-\delta}
\label{eqn-1.12}
\end{gather}
and  $\delta=\delta(\sigma)$, $\delta>0$ if $\sigma>0$ and $\delta=0$ if $\sigma=0$.
\end{theorem}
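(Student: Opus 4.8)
The plan is to pass from the $N$-body density to a one-particle self-consistent spectral density, run a rescaled semiclassical analysis near the shell $\{x\colon a\le\ell(x)\le 2a\}$, and quantify the remaining error with the help of the known ground-state asymptotics and of the concavity of the energy in the external potential. After the substitution $x\mapsto\y_m+ay$ the effective semiclassical parameter is $h=(a\zeta)^{-1}$, which is $\ll 1$ because $a\gg Z^{-1}$ (cf.\ \textup{(\ref{eqn-1.9})}); on that scale the effective potential has size $\zeta^2$ and, away from the nuclei, is smooth on the unit scale, so the microlocal machinery applies after a dyadic decomposition into subshells $\ell\asymp\const$, the inner zone $\ell\le Z^{-1/3}$ and the outer zone $\ell\ge Z^{-1/3}$ being handled by separate rescalings and matched at $\ell\sim Z^{-1/3}$.

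First I would exploit the variational structure. With $\mathcal{E}_N(W)=\inf\Spec\mathsf{H}_N[W]$ and $\mathcal{E}^{\TF}(W)$ the corresponding Thomas--Fermi energy, both are concave in $W$ (infima of affine functionals of $W$), and Feynman--Hellmann together with the envelope theorem give $\partial_\tau\mathcal{E}_N(V+\tau U)|_{\tau=0}=-\int U\rho_\Psi$ and $\partial_\tau\mathcal{E}^{\TF}(V+\tau U)|_{\tau=0}=-\int U\rho^{\TF}$. The main external input (the monster book and \cite{ivrii:strong-scott}; it is precisely here that the separation hypothesis \textup{(\ref{eqn-1.10})} enters and produces the gain $\delta=\delta(\sigma)$) is
\[
\mathcal{E}_N(W)=\mathcal{E}^{\TF}(W)+\text{Scott}(W)+\text{Dirac--Schwinger}(W)+O(Z^{5/3-\delta}),
\]
uniformly for $W=V+\tau U$ with $|\tau|\le\tau_0$, $\tau_0\asymp\zeta^2$ (such a perturbation only shifts the effective potential on the shell by $O(\zeta^2)$ and leaves the Coulomb singularities untouched). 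The Scott term is $\tau$-independent because $U$ vanishes near the nuclei, and the Dirac--Schwinger term is slowly varying, with $\tau$-derivatives $O(\zeta\|U\|_{\sL^1})=O(E\|U\|_{\sL^1})$. Feeding this into the two-sided secant inequalities for the concave maps $\tau\mapsto\mathcal{E}_N(V+\tau U)$ and $\tau\mapsto\mathcal{E}^{\TF}(V+\tau U)$, and using $|\partial_\tau^2\mathcal{E}^{\TF}|\le C\zeta\|U\|_{\sL^1}$ (the Thomas--Fermi compressibility, damped by Coulomb screening), bounds $|\int U(\rho_\Psi-\rho^{\TF})|$ by $C(\tau^{-1}Z^{5/3-\delta}+\zeta\|U\|_{\sL^1})$ for each $\tau\le\tau_0$; taking $\tau=\tau_0$ produces the additive error $G=\zeta^{-2}Z^{5/3-\delta}$, the $\zeta\|U\|_{\sL^1}$ part being absorbed into $E\|U\|_{\sL^1}$.

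The bulk of the estimate comes from the semiclassical analysis. The many-body reduction (coherent states, as in the proof of the energy asymptotics above) replaces $\rho_\Psi$ by $e_{\mathsf{H}_W}(x,x,0)$, the diagonal of the spectral projector of $\mathsf{H}_W=-\Delta-W$ for the self-consistent effective potential $W$; the reduction error, tested against $U$, is again $O(G)$. On the rescaled picture the sharp non-smooth local Weyl asymptotics give $|\int U\,(e_{\mathsf{H}_W}(x,x,0)-\tfrac{q}{6\pi^2}W_+^{3/2})\,dx|\le CE\|U\|_{\sL^1}$, the non-degeneracy required for this remainder (control of $|\nabla W|$ and of the set of looping classical trajectories) holding on all relevant scales by \textup{(\ref{eqn-1.10})}; this is the step that carries the power gain. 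It remains to replace $\tfrac{q}{6\pi^2}W_+^{3/2}$ by $\rho^{\TF}=\tfrac{q}{6\pi^2}(W^{\TF})_+^{3/2}$: since $W-W^{\TF}=-|x|^{-1}*(\rho_\Psi-\rho^{\TF})$ up to an additive constant and $(W^{\TF})^{1/2}\asymp\zeta$ on the shell, this difference, tested against $U$, is controlled by Cauchy--Schwarz in the Coulomb form through $\zeta\,D(U,U)^{1/2}D(\rho_\Psi-\rho^{\TF},\rho_\Psi-\rho^{\TF})^{1/2}$, where $D(\rho_\Psi-\rho^{\TF},\rho_\Psi-\rho^{\TF})\le CZ^{5/3-\delta}$ (again a consequence of the energy asymptotics) and $D(U,U)\le C\|U\|_{\sL^1}^{5/3}$ (from $|U|\le1$ and the Coulomb kernel). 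Optimising the ensuing localisation parameter $t$ between two competing contributions of sizes $t^{-1}\|U\|_{\sL^1}$ and $Ft^2$ yields the middle term $F^{1/3}\|U\|_{\sL^1}^{2/3}$, and a contraction estimate for the Thomas--Fermi screening operator $(1+\chi^{\TF}|x|^{-1}*)^{-1}$ closes the fixed-point relation for $\rho_\Psi-\rho^{\TF}$ and absorbs the constant-term contribution into $G$.

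The hard part is the final step in each regime: matching the semiclassical remainders across the interface $\ell\sim Z^{-1/3}$, where $\zeta$ and hence $h$ change their nature, and — for genuine molecules — controlling the region where subshells around distinct nuclei come close. Both are governed by \textup{(\ref{eqn-1.10})}; the real work is to propagate the gain $\delta(\sigma)$ simultaneously through the local Weyl remainder, through the Coulomb-norm density estimate, and through the screening/fixed-point step, so that it survives all three and appears in $E$, $F$, $G$ as stated.
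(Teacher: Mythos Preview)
Your starting point is right in spirit---the concavity/secant argument you describe in the first paragraph is exactly what estimate (\ref{eqn-2.1}) encodes---but the structure of the rest of your argument diverges from the paper's and, as written, does not produce the claimed middle term $F^{1/3}\|U\|_{\sL^1}^{2/3}$.

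The paper's route is shorter and avoids almost everything in your second paragraph. Estimate (\ref{eqn-2.1}) already has $W=W^{\TF}$; there is no intermediate ``self-consistent'' potential to be replaced, no Coulomb-norm step $D(\rho_\Psi-\rho^{\TF},\rho_\Psi-\rho^{\TF})^{1/2}$, no screening operator, and no fixed-point argument. Instead one writes the trace difference as $\int_0^{\varsigma}\Tr\bigl[U\,\uptheta(-H_{W^{\TF}+sU+\nu})\bigr]\,ds$ and analyses the integrand semiclassically with $h=(a\zeta)^{-1}$. A two-term successive approximation (unperturbed operator $H_{W^{\TF}+\nu}$, perturbation $sU$) together with the Tauberian method and a time cutoff $T$ gives the bound
\[
\Bigl|\Tr\bigl[U\,\uptheta(-H_{W^{\TF}+sU+\nu})\bigr]-\int U\rho^{\TF}\Bigr|
\le C\|U\|_{\sL^1}a^{-3}\bigl(T^{-1}h^{-2}+\varsigma\,T\,h^{-4}\bigr),
\]
the Weyl expression being \emph{exactly} $\int U\rho^{\TF}$ because $W=W^{\TF}$. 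Optimising in $T$ yields $C\|U\|_{\sL^1}a^{-3}(\varsigma^{1/2}h^{-3}+h^{-2})$; plugging back into (\ref{eqn-2.1}) and dividing by $\varsigma\zeta^{2}$ gives
\[
\bigl|\textstyle\int U(\rho_\Psi-\rho^{\TF})\bigr|
\le C\|U\|_{\sL^1}\bigl(\varsigma^{1/2}\zeta^{3}+\zeta^{2}a^{-1}\bigr)+C\varsigma^{-1}\zeta^{-2}Z^{5/3-\delta}.
\]
The three terms $E\|U\|_{\sL^1}$, $F^{1/3}\|U\|_{\sL^1}^{2/3}$, $G$ then arise simply by optimising over $\varsigma\in(0,\epsilon]$: the $\zeta^{2}a^{-1}$ piece is $E$; balancing $\varsigma^{1/2}\zeta^{3}\|U\|_{\sL^1}$ against $\varsigma^{-1}\zeta^{-2}Z^{5/3-\delta}$ gives the middle term; and the constraint $\varsigma\le\epsilon$ produces $G$.

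The genuine gap in your proposal is the origin of $F^{1/3}\|U\|_{\sL^1}^{2/3}$. Your Coulomb--Cauchy--Schwarz bound yields $\zeta\,\|U\|_{\sL^1}^{5/6}(Z^{5/3-\delta})^{1/2}$, which has the wrong exponents in both $\|U\|_{\sL^1}$ and $Z$, and the ``localisation parameter $t$'' you optimise between $t^{-1}\|U\|_{\sL^1}$ and $Ft^{2}$ is never actually introduced in your argument. What is really being optimised is the coupling $\varsigma$ in (\ref{eqn-2.1}), and the $\varsigma^{1/2}$ dependence comes from the \emph{perturbative} semiclassical error (comparing $u_s$ to $u_0$), not from a density comparison in Coulomb norm.
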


\begin{corollary}\label{cor-1.2}
Let \underline{either} $a\asymp Z^{-1}$ and $X\subset B(\y_m,a)$ \underline{or} $Z^ {-1}\le a \le 1$ and~$X\subset B(\y_m,a)\setminus B(\y_m,a/2)$. Then
\begin{enumerate}[label=(\roman*), wide, labelindent=0pt]
\item\label{cor-1.2-i}
The following estimate holds:
\begin{gather}
\|\rho_\Psi -\rho^\TF \|_{\sL^1 (X)} \le C\Bigl(E\mes(X) + F(\mes(X))^{2/3}+G\Bigr).
 \label{eqn-1.13}
\end{gather}

\item\label{cor-1.2-ii}
Assume that $|\rho_\Psi |\le \omega $ in $X$ with $\omega  \ge Z^{3/2}a^{-3/2}$\,\footnote{\label{foot-1} Such estimates could be found in \cite{ivrii:el-den}: $\omega  = Z^3$ for $a\le Z^{-8/9}$, $\omega =Z^{19/9} a^{-1}$ for $Z^{-8/9}\le a\le Z^{-7/9}$, $\omega =Z^{197/90}a^{-9/10}$ for $Z^{-7/9}\le a\le Z^{-1/3}$, $\omega=Z^{17/9}a^{-9/5}$ for $Z^{-1/3}\le a\le Z^{-5/18}$ and $\omega=Z^{19/9}a^{-1}$ for $a\ge Z^{-5/18}$.}. Then for $p=2,3,\ldots$ the following estimate holds:
\begin{gather}
  \| \rho_\Psi -\rho^\TF  \|_{\sL^p (X)}\le C\omega^{1-1/p}J_p^{1/p} 
  \label{eqn-1.14}\\
  \intertext{where $J_p$ with $p=1,2,\ldots$ is a solution to a recurrent relation}
J_p= \omega^{-1}E  J_{p-1} +( \omega^{-2}F)^{1/3}  J_{p-1}^{2/3}+ G, \qquad J_0= \omega\mes(X) .
\label{eqn-1.15}
\end{gather}
\end{enumerate}
\end{corollary}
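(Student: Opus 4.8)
The plan is to deduce both parts directly from Theorem~\ref{thm-1.1}, choosing the test function $U$ so that $\int U(\rho_\Psi-\rho^\TF)\,dx$ becomes the quantity to be estimated, and then obtaining part~\ref{cor-1.2-ii} from part~\ref{cor-1.2-i} by an induction on $p$.

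For part~\ref{cor-1.2-i} I would take $U=\sign(\rho_\Psi-\rho^\TF)\,\chi_X$ with $\chi_X$ the indicator of $X$; then $|U|\le 1$, $\|U\|_{\sL^1}=\mes(X)$, and $\int U(\rho_\Psi-\rho^\TF)\,dx=\|\rho_\Psi-\rho^\TF\|_{\sL^1(X)}$. In the annular case $X\subset B(\y_m,a)\setminus B(\y_m,a/2)$ one has $\ell(x)\asymp a$ on $X$ — the separation hypothesis \eqref{eqn-1.10} excludes the other nuclei — so $U$ is admissible in \eqref{eqn-1.11} with $a$ replaced by $a/2$, $\zeta\asymp\zeta(a)$, and $E,F,G$ as in \eqref{eqn-1.12}; this is \eqref{eqn-1.13}. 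In the ball case $a\asymp Z^{-1}$ one covers $X$ by the $O(1)$ dyadic annuli at scales comparable to $Z^{-1}$ (on each of which $\zeta\asymp Z$), applies the annular bound and sums the resulting geometric series (dominated by its largest term), and disposes of the residual ball of radius $\asymp Z^{-1}$ by the crude estimate $\|\rho_\Psi-\rho^\TF\|_{\sL^1}\le\int(\rho_\Psi+\rho^\TF)\lesssim 1$, which the right-hand side of \eqref{eqn-1.13} absorbs.

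For part~\ref{cor-1.2-ii} I would argue by induction on $p\ge 0$, the statement for $p$ being $\|\rho_\Psi-\rho^\TF\|_{\sL^p(X)}^p\le C^p\omega^{p-1}J_p$. For $p=0$ this reads $\mes\bigl(X\cap\{\rho_\Psi\ne\rho^\TF\}\bigr)\le\omega^{-1}J_0=\mes(X)$ and is trivial; for $p=1$ it coincides with part~\ref{cor-1.2-i}, since substituting $J_0=\omega\mes(X)$ into \eqref{eqn-1.15} yields $J_1=E\mes(X)+F^{1/3}\mes(X)^{2/3}+G$. For the step $p-1\to p$ with $p\ge 1$, note first that on $X$ one has $\rho^\TF\le CZ^{3/2}\ell^{-3/2}\lesssim Z^{3/2}a^{-3/2}\le\omega$, hence $|\rho_\Psi-\rho^\TF|\le C_0\omega$ there, so that
\begin{equation*}
U_p\coloneqq (C_0\omega)^{1-p}\,|\rho_\Psi-\rho^\TF|^{p-1}\sign(\rho_\Psi-\rho^\TF)\,\chi_X
\end{equation*}
obeys $|U_p|\le 1$ and has the support required by \eqref{eqn-1.7}. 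With this choice $\int U_p(\rho_\Psi-\rho^\TF)\,dx=(C_0\omega)^{1-p}\|\rho_\Psi-\rho^\TF\|_{\sL^p(X)}^p$ and $\|U_p\|_{\sL^1}=(C_0\omega)^{1-p}\|\rho_\Psi-\rho^\TF\|_{\sL^{p-1}(X)}^{p-1}$; inserting the inductive bound for the latter into \eqref{eqn-1.11}, multiplying through by $(C_0\omega)^{p-1}$, and collecting the powers of $\omega$ reproduces precisely the recursion \eqref{eqn-1.15}, the constants generated being absorbed into $C^p$ once $C$ is chosen larger than both $C_0$ and the constant of Theorem~\ref{thm-1.1}. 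Extracting $p$-th roots gives \eqref{eqn-1.14}.

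The main obstacle I anticipate is the constant bookkeeping in the inductive step: one must verify that the constant furnished by Theorem~\ref{thm-1.1} propagates in the shape $C^p$, so that the clean, constant-free recursion \eqref{eqn-1.15} genuinely governs $J_p$ after the $p$-th root is taken. The remaining ingredients — checking the support and size conditions on $U_p$ near the nuclei (using $\rho_\Psi\le\omega$ from the footnote and $\rho^\TF\lesssim Z^{3/2}\ell^{-3/2}$) and the finite dyadic reduction together with the residual-ball estimate in the ball case — are routine.
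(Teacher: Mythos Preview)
Your proposal is correct and follows the paper's argument almost verbatim: for part~\ref{cor-1.2-i} the paper also plugs $U=\sign(\rho_\Psi-\rho^\TF)\chi_X$ into \eqref{eqn-1.11}, and for part~\ref{cor-1.2-ii} it likewise takes $U=\omega^{1-p}|\rho_\Psi-\rho^\TF|^{p-1}\sign(\rho_\Psi-\rho^\TF)\chi_X$, sets $J^*_p\coloneqq\omega^{1-p}\|\rho_\Psi-\rho^\TF\|_{\sL^p(X)}^p$, obtains the recursive inequality $J^*_p\le C\bigl(\omega^{-1}EJ^*_{p-1}+(\omega^{-2}F)^{1/3}(J^*_{p-1})^{2/3}+G\bigr)$ with $J^*_0\le\omega\mes(X)$, and concludes $J^*_p\le C_pJ_p$. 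The paper does not separately treat the ball case $a\asymp Z^{-1}$ or spell out the constant propagation you flag as the main obstacle; your dyadic reduction and explicit $C^p$ bookkeeping are extra care rather than a different route.
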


\begin{remark}\label{rem-1.3}
\begin{enumerate}[label=(\roman*), wide, labelindent=0pt]
\item\label{rem-1.3-i} 
As we mentioned, in \cite{ivrii:strong-scott} $\|\rho_\Psi-\rho_m\|_{\sL^p}$ is estimated as $a=|x-\y_m|\le Z^{-1/2-\varkappa}$ with arbitrarily small exponent $\varkappa>0$, where $\rho_m$ is  the electronic density for a single atom in the model with no interactions between electrons. This estoimate is obtained almost exclusively by functional-analytic methods, while in the current paper we just start from the estimate (\ref{eqn-2.1}) and apply methods of non-smooth microlocal analysis.

As $\|U\|_{\sL^(X)}\asymp a^{3}$ in Theorem~\ref{thm-1.1} and  $\mes(X)\asymp a^3$ in Corollary~\ref{cor-1.2} the estimates of that paper are better than the corresponding estimates of this one, but for relatively small $\|U\|_{\sL^1(X)}$  and $\mes(X)$ those of the present paper may be better.

However, as $\|U\|_{\sL^1(X)}$  and $\mes(X)$  are too small the right-hand expressions of (\ref{eqn-1.11}), (\ref{eqn-1.13}) and (\ref{eqn-1.14}) may be larger, than $\zeta^3 \int _X |U|\,dx $ or $\zeta^3\mes(X)$ respectively in which case those are no more remainder estimates.

\item\label{rem-1.3-ii} 
The methods developed in this paper work without any modifications in the case of the self-generated magnetic field (see Chapter~\ref{monsterbook-sect-27} of \cite{monsterbook}), in the relativistic settings (see~\cite{ivrii:relativistic-1} and in the relativistic settings and self-generated magnetic field combined (see~\cite{ivrii:relativistic-2}). In all these cases exactly the same remainder estimate (\ref{eqn-1.9}) could be derived. In the relativistic case the relative error (when we use semiclassical non-relativistic formula instead of relativistic one ) would be of magnitude $1/(Za)$ which is less than the relative semiclassical error; the same is true when we introduce self-generated magnetic field, except we know the upper bound rather than the magnitude of the error. 

On the shorter distances (like in \cite{ivrii:strong-scott}), however, the asymptotics of $\rho_\Psi$ would be different in the relativistic case for sure because the Scott correction term is different.
\end{enumerate}
\end{remark}

The proof is short. It is based on estimate (\ref{eqn-2.1}) which is based on functional-analytic arguments combined with the sharp asymptotics for the ground state energy (which is also used in \cite{ivrii:strong-scott}) and on microlocal analysis of  a one-particle Hamiltonian with a potential $V=W^\TF+\varsigma U$ where $U$ is supported in $\cX$ and satisfies $|U(x)|\le  W^\TF$. \todo[color=green]{WORK}

\chapter{Proof of main results}
\label{sect-2}

The following estimate  is  (\ref{strongscott-eqn-4.1}) of \cite{ivrii:strong-scott}, proven under assumptions (\ref{eqn-1.8}):
\begin{equation}
\pm \varsigma \int U\rho_\Psi \,dx  \le 
\Tr [(H_{W+\nu}^-]   -  \Tr [H_{W\pm \varsigma U+\nu}^-] +C Z^{5/3-\delta}
\label{eqn-2.1}
\end{equation}
with $\delta=\delta(\sigma)>0$ as $\sigma>0$ and $\delta=0$ as $\sigma=0$, where $W=W^\TF$ is Thomas-Fermi potential. 

We assume that
\begin{gather}
\supp(U)  \subset \{x\colon a\le \ell(x)\le 2a\}, \qquad \epsilon Z^{-1}\le a\le \epsilon,\qquad |U|\le  \zeta^2\,,
\label{eqn-2.2}
\intertext{where here and below}
\zeta = \left\{\begin{aligned}
&Z^{1/2}\ell^{-1/2} && \ell \le Z^{-1/3},\\
&\ell^{-2} &&\ell \ge Z^{-1/3}, 
\end{aligned}\right.
\label{eqn-2.3}
\shortintertext{and}
0<\varsigma \le \epsilon.
\label{eqn-2.4}
\end{gather}

We rewrite the right-hand expression as
\begin{equation}
-\int_0^\varsigma \Tr \bigl(U \uptheta (-H_{W\pm s U+\nu})\bigr)\,ds.
\label{eqn-2.5}
\end{equation}
After rescaling $x\mapsto x a^{-1}$, $\tau\mapsto \tau \zeta^{-2}$ we are in the semiclassical settings with 
$h= 1/(\zeta a)$. Condition to $a$ ensures that $h\le 1$. 

Due to Tauberian method we can rewrite the integrand  as
\begin{gather}
h^{-1}\int_{-\infty}^0 F_{t\to h^{-1}\tau} \bigl(\bar{\chi}_T (t) \Gamma (U(x) u_s)\bigr)\,d\tau
\label{eqn-2.6}\\
\intertext{with an error, essentially not exceeding}
C\varsigma T^{-1} \sup _{|\tau|\le \epsilon_1} |F_{t\to \tau} \bigl(\bar{\chi}_T (t) \Gamma (|U(x)| u_s )\bigr)|
\label{eqn-2.7}
\end{gather}
(we will write an exact statement later) where $\bar{\chi}_T(t)=\bar{\chi}(t/T)$, $\bar{\chi}\in \sC_0^\infty ([-1,1])$, equal $1$ on $[-\frac{1}{2},\frac{1}{2}]$, $h\le T\le \epsilon$,  and
\begin{claim}\label{eqn-2.8}
$u_s(x,y,t) $ is a Schwartz kernel of $\exp( -i h^{-1}t H_{W+s U+\nu})$,
\end{claim}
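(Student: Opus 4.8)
This statement is in effect a definition of the symbol $u_s(x,y,t)$; what has to be supplied is that the operator on the right is a legitimate object and that $u_s$ has the regularity the subsequent Tauberian and microlocal analysis silently uses. The first step is self-adjointness of $H_{W+sU+\nu}$. After the rescaling $x\mapsto xa^{-1}$, $\tau\mapsto\tau\zeta^{-2}$ just performed it becomes, up to an overall positive constant, a semiclassical Schr\"odinger operator $h^2(-\Delta)-\cV_s(x)$ with $h=1/(\zeta a)\le 1$, where $\cV_s$ is the corresponding rescaling of $W^\TF+sU+\nu$: a finite sum of Coulomb singularities $c_m|x-\y_m|^{-1}$ (the TF potential being smooth away from the nuclei), a bounded decaying remainder, the bounded compactly supported term from $sU$, and the constant $\nu$; on the rescaled support $\{1\le\ell(x)\le 2\}$ of $U$ one has in addition $\cV_s\asymp 1$. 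Each Coulomb term is $(-\Delta)$-bounded with relative bound $0$ in dimension three, so by Kato--Rellich $H_{W+sU+\nu}$ has a unique self-adjoint realization on the domain of $-\Delta$, uniformly for $0\le s\le\varsigma\le\epsilon$ (the two sign choices in (\ref{eqn-2.5}) being handled identically, since $-U$ satisfies the same bound $|U|\le\zeta^2$ as $U$).

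By Stone's theorem $\exp(-ih^{-1}tH_{W+sU+\nu})$ is then a strongly continuous unitary group in the parameter $h^{-1}t$. Being bounded on $\sH=\sL^2(\bR^3,\bC^q)$ it is in particular continuous from Schwartz functions on $\bR^3$ to tempered distributions, so the Schwartz kernel theorem furnishes a distributional kernel on $\bR^3_x\times\bR^3_y$ depending on $(s,t)$; this is what we write as $u_s(x,y,t)$. Strong continuity in $t$, together with analyticity in $s$ on the relevant interval (analytic perturbation theory, $sU$ being a bounded symmetric perturbation of fixed relative bound $0$), provides the joint regularity in $(s,t)$ that makes the expressions $\Gamma(U(x)u_s)$, $F_{t\to h^{-1}\tau}(\bar{\chi}_T(t)\Gamma(U u_s))$ and the $s$-integral in (\ref{eqn-2.5}) meaningful.

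What remains is to record, rather than reprove, the properties of $u_s$ that the Tauberian step invokes, all standard for a Schr\"odinger operator with a Coulomb-plus-bounded potential and available from \cite{monsterbook}: finite speed of propagation modulo negligible errors; smoothness of $u_s$ in all variables off the nuclei and, for $|t|\le\epsilon$, off the diagonal $x=y$, with the usual singularity as $t\to 0$; and the resulting two-sided control of $F_{t\to h^{-1}\tau}(\bar{\chi}_T(t)\Gamma(U u_s))$ behind (\ref{eqn-2.6})--(\ref{eqn-2.7}). The only point needing a comment --- and the sole, mild, obstacle --- is that these facts must be used in a neighborhood of $\supp(U)$ avoiding the singular locus $\{\y_m\}$ of $\cV_s$; this is guaranteed by $a\ge\epsilon Z^{-1}$, which puts $\supp(U)$ at rescaled distance between $1$ and $2$ from the nearest nucleus, and by (\ref{eqn-1.10}), which separates distinct nuclei on that scale, so that a fixed cutoff around $\supp(U)$ reduces the local analysis to the smooth-coefficient case.
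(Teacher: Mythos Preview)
Your reading is correct: in the paper, (\ref{eqn-2.8}) is not a proposition with a proof but a numbered display introducing notation---the \texttt{claim} environment here is just a labeled equation-like block, and the paper offers no argument whatsoever for it. You correctly identify this and then supply the well-posedness background (Kato--Rellich self-adjointness, Stone's theorem, the Schwartz kernel theorem, and the remark that $\supp(U)$ sits away from the Coulomb singularities) that the paper takes entirely for granted. So your proposal is sound and strictly more detailed than the paper; there is nothing to compare against on the paper's side beyond the bare definition.
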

$\Gamma  v= \int (\Gamma_x v)\, dx$ and $\Gamma_x v=v(x,x,\cdot)$.

Under assumptions (\ref{eqn-2.2})--(\ref{eqn-2.5}), using two-terms successive approximation method with unperturbed operator $H_{W+\nu}$ and perturbation $s U$, we can evaluate  
\begin{gather}
F_{t\to h^{-1}\tau} \bigl(\bar{\chi}_T (t) \Gamma (U(x) u_s)\bigr)
\label{eqn-2.9}\\
\shortintertext{as}
F_{t\to h^{-1}\tau} \bigl(\bar{\chi}_T (t) \Gamma (U(x) u_0)\bigr)
\label{eqn-2.10}
\end{gather}
with an error, not exceeding  
\begin{gather}
C \|U\|_{\sL^1} a^{-3}  h^{-1}T^2 \varsigma   \times h^{-3}. 
\label{eqn-2.11}\\
\intertext{Indeed, one can see easily that the absolute value of $n$-th term is estimated by}
Ch^{1-n}T^{n} \varsigma^{n-1} \times \|U\|_{\sL^1} a^{-3} h^{-3}
\label{eqn-2.12}
\end{gather}
for $n\ge 1$ and (\ref{eqn-2.11}) is (\ref{eqn-2.12}) with $n=1$.

However, we can do better than this:

\begin{proposition}\label{prop-2.1}
 Under assumptions \textup{(\ref{eqn-2.2})}--\textup{(\ref{eqn-2.4})} and $T\le \epsilon$ the following estimate holds:
\begin{gather}
|F_{t\to h^{-1}\tau} \bigl(\bar{\chi}_T (t) \Gamma (U(x) u_0)\bigr) |\le  C \|U\|_{\sL^1}  a^{-3} h^{-2}\,.
\label{eqn-2.13}
\end{gather}
\end{proposition}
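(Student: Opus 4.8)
The plan is to estimate the Fourier-transformed, mollified diagonal $F_{t\to h^{-1}\tau}(\bar\chi_T(t)\Gamma(U(x)u_0))$ by reducing it to a pointwise (in $x$) bound on the local density of states of the unperturbed semiclassical operator $H_{W+\nu}$ and then integrating against $U$. Since $u_0$ is the Schwartz kernel of $\exp(-ih^{-1}tH_{W+\nu})$, we have $\Gamma(U(x)u_0)=\int U(x)\,\Gamma_x u_0(x,t)\,dx$, so the quantity in question is $\int U(x)\bigl[F_{t\to h^{-1}\tau}(\bar\chi_T(t)\,u_0(x,x,t))\bigr]\,dx$. Thus it suffices to show that, uniformly in $x$ in the support of $U$ and in $\tau\le\epsilon_1$,
\begin{equation}
|F_{t\to h^{-1}\tau}\bigl(\bar\chi_T(t)\,u_0(x,x,t)\bigr)|\le C h^{-2}\,,
\label{eqn-prop21-pointwise}
\end{equation}
after which \eqref{eqn-2.13} follows by multiplying by $|U(x)|\le\zeta^2$ (which rescales to $O(1)$) and integrating, picking up the factor $\|U\|_{\sL^1}a^{-3}$ from the rescaling $x\mapsto xa^{-1}$.

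The estimate \eqref{eqn-prop21-pointwise} is the standard local semiclassical bound for the smoothed spectral function: after rescaling, $W+\nu$ is a smooth (indeed, in the relevant zone, nondegenerate) potential with all derivatives bounded on the scale $1$, and the operator $H_{W+\nu}=h^2\Delta_{\mathrm{scaled}}-(W+\nu)$ is a semiclassical Schrödinger operator with $h=1/(\zeta a)\le1$. The sharp pointwise asymptotics for $F_{t\to h^{-1}\tau}(\bar\chi_T(t)u_0(x,x,t))$ — valid for $T\le\epsilon$ bounded away from any return time, which is exactly what the hypothesis $T\le\epsilon$ buys — give that this quantity is $O(h^{1-d})=O(h^{-2})$ in dimension $d=3$; this is, e.g., a consequence of the standard successive-approximation construction of $u_0$ for $|t|\le T\le\epsilon$ (Chapter~\ref{monsterbook-sect-27} type arguments, or the elementary stationary-phase/Tauberian estimates in \cite{monsterbook}). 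One must check that the rescaled potential genuinely satisfies the required regularity and nondegeneracy on $\supp(U)\subset\{a\le\ell(x)\le2a\}$: away from the nuclei $W^\TF+\nu$ and all its rescaled derivatives are $O(\zeta^2)\to O(1)$ after the $\tau\mapsto\tau\zeta^{-2}$ rescaling, by the known Thomas-Fermi asymptotics $W^\TF\asymp\zeta^2$ in this zone together with condition \eqref{eqn-1.10} which keeps different nuclei from interfering on scale $a$.

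The main obstacle, and the step that needs genuine care rather than citation, is handling the region where $W+\nu$ may be small or change sign on the scale $a$ — i.e. near the classically forbidden region or near a turning point — since there the naive parametrix breaks down and the "nondegenerate" hypotheses of the off-the-shelf local asymptotics may fail. The remedy is the usual one: one splits $\supp(U)$ by a partition of unity into the elliptic zone (where $W+\nu<-\epsilon_0$, and the bound is trivial since the relevant spectral interval is empty and the contribution is $O(h^\infty)$), the classically allowed nondegenerate zone (where the sharp $O(h^{-2})$ bound applies directly), and a boundary layer around turning points whose measure after rescaling is controlled, and where a rougher bound (still $O(h^{-2})$, possibly with an extra harmless logarithm absorbed into the constant, or using an Airy-type model) suffices because we only need an upper bound, not an asymptotic expansion. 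Assembling these three contributions and undoing the rescaling yields \eqref{eqn-2.13}.
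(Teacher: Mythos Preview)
Your proposal follows essentially the same strategy as the paper: reduce to the pointwise bound \eqref{eqn-prop21-pointwise} on $F_{t\to h^{-1}\tau}(\bar\chi_T(t)\,\Gamma_x u_0)$ (so that the non-smoothness of $U$ is irrelevant), then partition $x$-space into an elliptic zone, a ``good'' zone where the standard $O(h^{-2})$ local bound applies, and a transition layer near the turning surface.

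The differences are in precision and terminology rather than in substance. The paper phrases the ``good'' zone condition as \emph{$x$-microhyperbolicity} of $H_{W+\nu}$ at energy $0$, and locates the zones concretely in terms of the chemical potential: the operator is $x$-microhyperbolic for $\ell(x)\le\epsilon_0|\nu|^{-1/4}$ (since $W^{\TF}\asymp Z\ell^{-1}$ dominates $|\nu|\le CZ^{4/3}$ when $\ell\le\epsilon Z^{-1/3}$, and the argument extends out to the Thomas--Fermi radius), elliptic for $\ell(x)\ge C_0|\nu|^{-1/4}$, and the only problematic shell is $\ell(x)\asymp|\nu|^{-1/4}$. For that shell the paper does \emph{not} invoke an Airy model or a measure-of-the-boundary-layer argument as you suggest; instead it appeals to the partition--rescaling technique of \cite{monsterbook}, Subsection~5.1.1, which in dimension $d=3$ recovers the same $O(h^{-2})$ bound without loss. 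Your ``rougher bound, possibly with a harmless logarithm'' is the one place where your argument is genuinely sketchy---the point is that such a logarithm need not appear, and the clean way to see this is exactly the rescaling argument (the contribution from each dyadic sublayer gains a power of the sublayer scale because $d=3>2$, so the sum converges). Otherwise your outline matches the paper's proof.
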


\begin{proof}
Estimate (\ref{eqn-2.13}) is due to  semiclassical microlocal arguments applied to $\Gamma_x u$, in which case non-smoothness of $U$ plays no role.  

Indeed,   for $\ell(x) \le \epsilon Z^{-1/3}$ we have  $W\coloneqq W^\TF\asymp Z\ell^{-1}$, and  $0\le -\nu \le C Z^{4/3}$, so in this case  operator $H_{W+\nu}$ is $x$-microhyperbolic on the energy level $0$. 

 Furthermore,  this operator  is also $x$-microhyperbolic on the energy level $0$ for 
$\ell (x) \le \epsilon_0 |\nu|^{1/4}\asymp \epsilon_0 (Z-N)_+^{-1/3}$\,\footnote{\label{foot-2} Recall that $|\nu|\asymp (Z-N)_+^{1/4}$.} and elliptic for $\ell (x) \ge C_0 |\nu|^{1/4}$. 

Finally, as $\epsilon_0 |\nu |^{-1/4} \le \ell(x) \le C_0 |\nu|^{-1/4}$ $x$-microhyperbolicity fails but estimate (\ref{eqn-2.13})   follows from the partition-rescaling technique of Subsection~\ref{monsterbook-sect-5-1-1} of \cite{monsterbook} (since dimension is $3$).  We leave easy details to the reader.
\end{proof}

Thus, in the framework of Proposition~\ref{prop-2.1} we can estimate expression (\ref{eqn-2.9}):  
\begin{equation}
|F_{t\to h^{-1}\tau} \bigl(\bar{\chi}_T (t) \Gamma (U(x) u_s)\bigr)|\le C\|U\|_{\sL^1} a^{-3}\bigl(h^{-2}  +   \varsigma  T^2 h^{-4}\bigr)\,,
\label{eqn-2.14}
\end{equation}
where the first term in the right-hand expression is due to (\ref{eqn-2.13}) and the second one is (\ref{eqn-2.11}). 

Using this estimate and standard Tauberian arguments we can estimate 
\begin{multline}
|\Tr \bigl[ U\uptheta (- H_{W+sU+\nu})\bigr] \\
\shoveright{-h^{-1}\int_{-\infty}^0  F_{t\to h^{-1}\tau}\bigl( \bar{\chi}_T(t)\Gamma U(x)u_s(.,.,t)\bigr)\,d\tau |}\\
\le C\|U\|_{\sL^1} a^{-3}\bigl(T^{-1}h^{-2}  +   \varsigma  T h^{-4}\bigr)\,,
\label{eqn-2.15}
\end{multline} 
where the right-hand expression is the right-hand expression of (\ref{eqn-2.14}), multiplied by $T^{-1}$.

On the other hand, without these microlocal arguments and for $\varphi \in \sC_0^\infty ([-1,-\frac{1}{2}])$, 
$L T\ge h$ the following estimate holds:
\begin{multline}
|\int_{-\infty}^0  F_{t\to h^{-1}\tau} \varphi _L(\tau) \bigl(\bar{\chi}_T (t) \Gamma (U(x) (u_0-u_s)\bigr)\,d\tau|\le\\
C_kL \|U\|_{\sL^1}a^{-3}  h^{-1}T^2 \varsigma h^{-3}\times (h/TL)^k L 
\label{eqn-2.16}
\end{multline}
with arbitrarily large exponent $k$. \enlargethispage{\baselineskip}

Therefore, if we replace in the left-hand expression $\varphi _L(\tau)$ by $1$,  the result would not exceed the same right-hand expression with 
$L=h/T$, i.e.  $C_k \|U\|_{\sL^1} a^{-3}T\varsigma h^{-3}$, and therefore 
\begin{multline}
h^{-1}|\int_{-\infty}^0 F_{t\to h^{-1}\tau} \bigl(\bar{\chi}_T (t) \Gamma (U(x) (u_s-u_0))\bigr)\,d\tau|\\
\le C\|U\|_{\sL^1} a^{-3} \varsigma  T h^{-4}.
\label{eqn-2.17}
\end{multline}
Therefore  
\begin{claim}\label{eqn-2.18}
Estimate (\ref{eqn-2.15}) remains true, if we replace there $u_s$ by $u_0$.
\end{claim}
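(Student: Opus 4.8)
\smallskip

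Since \textup{(\ref{eqn-2.17})} has already been established, the passage to \textup{(\ref{eqn-2.18})} is essentially bookkeeping, and I would argue as follows. Replacing $u_s$ by $u_0$ in the second term on the left-hand side of \textup{(\ref{eqn-2.15})} changes that expression by at most
\begin{equation*}
h^{-1}\Bigl|\int_{-\infty}^0 F_{t\to h^{-1}\tau}\bigl(\bar{\chi}_T(t)\Gamma\bigl(U(x)(u_s-u_0)\bigr)\bigr)\,d\tau\Bigr|,
\end{equation*}
which is precisely the quantity bounded in \textup{(\ref{eqn-2.17})} by $C\|U\|_{\sL^1}a^{-3}\varsigma T h^{-4}$.

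Therefore, by the triangle inequality, the left-hand side of \textup{(\ref{eqn-2.15})} with $u_0$ in place of $u_s$ does not exceed the sum of the right-hand sides of \textup{(\ref{eqn-2.15})} and \textup{(\ref{eqn-2.17})}, namely
\begin{equation*}
C\|U\|_{\sL^1}a^{-3}\bigl(T^{-1}h^{-2}+\varsigma T h^{-4}\bigr)+C\|U\|_{\sL^1}a^{-3}\varsigma T h^{-4}.
\end{equation*}
The extra summand $C\|U\|_{\sL^1}a^{-3}\varsigma T h^{-4}$ is, up to the numerical value of $C$, the second term already present on the right-hand side of \textup{(\ref{eqn-2.15})}; absorbing it there (at the cost of enlarging $C$) gives \textup{(\ref{eqn-2.15})} verbatim with $u_s$ replaced by $u_0$.

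The only point where a little care is needed is this last absorption: one must verify that the correction inherited from \textup{(\ref{eqn-2.17})} is of the same order $\varsigma T h^{-4}$ as the crude Tauberian term in \textup{(\ref{eqn-2.15})}, so that it does not degrade the estimate. This is automatic from the way \textup{(\ref{eqn-2.17})} was obtained — the cutoff $\varphi_L$ is removed by taking $L=h/T$, which is exactly the threshold at which the gain $(h/TL)^k$ of repeated integration by parts in $t$ ceases to help. No genuine obstacle remains here; the substantive work was already done in \textup{(\ref{eqn-2.16})}--\textup{(\ref{eqn-2.17})} through the two-term successive approximation around $H_{W+\nu}$ with the $\tau$-localization $\varphi_L$.
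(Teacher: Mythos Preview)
Your argument is correct and is exactly the one the paper intends: (\ref{eqn-2.18}) is deduced from (\ref{eqn-2.15}) and (\ref{eqn-2.17}) by the triangle inequality, the extra $C\|U\|_{\sL^1}a^{-3}\varsigma T h^{-4}$ being absorbed into the second summand on the right of (\ref{eqn-2.15}). The paper's entire justification is the single word ``Therefore'' preceding (\ref{eqn-2.18}); your write-up simply spells this out.
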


However, then we can replace Tauberian expression by the corresponding Weyl expression 
\begin{equation}
\frac{q}{6\pi^2} \int U (W+\nu)_+^{3/2} \,dx = \int U(x)\rho^\TF(x) \,dx\
\label{eqn-2.19}
\end{equation}
because $W=W^\TF$.

Plugging it into (\ref{eqn-2.15}) and minimizing  by $T\in [h,1]$ we get 
\begin{multline}
|\Tr \bigl[ U\bigl(\uptheta (- H_{W+sU+\nu})\bigr] -\int U(x)\rho^\TF (x)\,dx |\\ 
\le C \|U\|_{\sL^1}  a^{-3}\bigl(\varsigma^{1/2}h^{-3} + h^{-2}\bigr)
\label{eqn-2.20}
\end{multline} 
for $T\asymp \min\bigl( h\varsigma^{-1/2},\,1\bigr)$.

Since
\begin{gather}
\Tr \bigl[H_{W+\nu}^-\bigr]   -  \Tr \bigl[H_{W\pm \varsigma U+\nu}^-\bigr] = \int _0^\varsigma  \Tr \bigl[ U\bigl(\uptheta (- H_{W+sU+\nu})\bigr] \,ds
\label{eqn-2.21}
\end{gather}
inequality (\ref{eqn-2.1}) implies, after we divide by $\varsigma \zeta^2$,
\begin{multline}
|\int \bar{U} (x)\bigl(\rho_\Psi(x)-\rho^\TF(x)\bigr)\,dx|\\
 \le  C \|\bar{U}\|_{\sL^1}  a^{-3}\bigl(\varsigma^{1/2}\zeta^3 a^3 +  \zeta^2 a^2\bigr) + \varsigma^{-1}\zeta^{-2}Z^{5/3-\delta}.
 \label{eqn-2.22}
 \end{multline}
with $\bar{U}=\zeta^{-2}U$, satisfying (\ref{eqn-1.7}). \emph{From now on we redefine $U\coloneqq \bar{U}$.}

Minimizing the right-hand expression by $\varsigma \le \epsilon$ we arrive to estimate (\ref{eqn-1.11})--(\ref{eqn-1.12}).
Theorem~\ref{thm-1.1} has been proven.

Next, picking up 
\begin{equation*}
U(x)=\sign (\rho_\Psi (x)- \rho^\TF (x))\chi_X(x)
\end{equation*}
with $\chi_X$ characteristic function of $X$ we arrive to (\ref{eqn-1.11}) with the left-hand expression replaced by
$\|\rho_\Psi - \rho^\TF \|_{\sL^1(X)}$ and the right-hand expression with $\|U\|_{\sL^1}= \mes(X)$, which proves Corollary~\ref{cor-1.2}\ref{cor-1.2-i}.

Furthermore, in the framework of Corollary~\ref{cor-1.2}\ref{cor-1.2-ii} we pick up
\begin{gather*}
U(x)=\omega^{1-p} \sign (\rho_\Psi (x)- \rho^\TF (x))|(\rho_\Psi (x)- \rho^\TF (x))|^{p-1} \chi_X(x)
\end{gather*}
and we conclude that  $J^*_p\coloneqq\omega^{1-p}\| \rho_\Psi - \rho^\TF\|^p _{\sL^p(X)}$ satisfies 
\begin{gather}
J^*_p\le C\bigl( \omega^{-1} E  J^*_{p-1} + (\omega^{-2} F) ^{1/3} J_{p-1}^{*\,2/3}+ G \bigr), \qquad J^*_0\le \omega\mes(X) 
\label{eqn-2.24}
\end{gather}
with $E$, $F$ and $G$ defined by (\ref{eqn-1.12}) and therefore $J^*_p\le C_p J_p$ with $J_p$ defined as solutions of 
(\ref{eqn-1.15}).  This conclude the proof of Corollary~\ref{cor-1.2}\ref{cor-1.2-ii}.

\end{document}